\newtheorem{proposition}{Proposition}
\newtheorem{theorem}{Theorem}
\newtheorem{lemma}{Lemma}
\newtheorem{remark}{Remark}
\numberwithin{equation}{section}
\begin{document}

\title[Well-posedness for RD eqs of BZ reaction]{A well-posedness for the reaction diffusion equations of Belousov-Zhabotinsky reaction}
\author[Kondo, Novrianti, Sawada and Tsuge]{S.~Kondo{$^\ast$}, Novrianti{$^\ast$}, O.~Sawada{$^\ast$}, N.~Tsuge{$^\dagger$}}
\address{{$^\ast$} Applied Physics Course, Faculty of Engineering, Gifu University, Yanagido 1-1, Gifu, 501-1193, Japan}
\address{{$^\dagger$} Department of Mathematics Education, Faculty of Education, Gifu University, Yanagido 1-1, Gifu, 501-1193, Japan}
\email{x3912006@edu.gifu-u.ac.jp \,\,\, okihiro@gifu-u.ac.jp}
\subjclass[2010]{35Q30}
\keywords{Belousov-Zhabotinsky reaction, reaction diffusion equation, invariant region, time evolution operator}
\thanks{
	N. Tsuge's research is partially supported by Grant-in-Aid for Scientific 
	Research (C) 17K05315, Japan.
}
%
%

\begin{abstract}
The time-global existence of unique smooth positive solutions to the reaction diffusion equations of the Keener-Tyson model for the Belousov-Zhabotinsky reaction in the whole space is established with bounded non-negative initial data. Deriving estimates of semigroups and time evolution operators, and applying the maximum principle, the unique existence and the positivity of solutions are ensured by construction of time-local solutions from certain successive approximation. Invariant regions and large time behavior of solutions are also discussed.
\end{abstract}

\maketitle

%
%

\section{Introduction}
We consider the following initial value problem of the reaction diffusion equations of Keener-Tyson type for Belousov-Zhabotinsky reaction in the whole space ${\mathbb R}^n$ for $n \in {\mathbb N}$:
\[
  ({\mathrm{BZ}}) \,\, \left\{
  \begin{array}{ll}
    \displaystyle \partial_t u = \Delta u + \frac{1}{\varepsilon} u(1-u) - h v \frac{u-q}{u+q} \quad & {\text{in}} \quad {\mathbb R}^n \!\times\! (0, \infty), \\
    \displaystyle \partial_t v = d \Delta v - v + u \quad & {\text{in}} \quad {\mathbb R}^n \!\times\! (0, \infty), \\
    \displaystyle u|_{t=0} = u_0, \quad v|_{t=0} = v_0 \quad & {\text{in}} \quad {\mathbb R}^n.
  \end{array}
  \right.
\]
For the derivation, see \cite{KT86, KT88}. Here, two variables $u = u(x, t)$ and $v = v(x, t)$ stand for the unknown scalar functions at $x \in {\mathbb R}^n$ and $t > 0$ who denote the concentrations in a vessel of HBrO2 and Ce4+, respectively; $u_0 = u_0(x)$ and $v_0 = v_0(x)$ are given non-negative bounded functions. We denote $\varepsilon$, $h$, $q$ and $d$ by some positive constants. In \cite{Chen00}, an example of constants is listed-up as $\varepsilon = 0.032$, $q = 2.0 \times 10^{-4}$ and $d = 0.6$. Note that $h$ stands for the excitability which governs the dynamics of a pattern formulation. In fact, a spiral pattern appears for large $h$. Besides, a ripple (concentric circles) pattern is developed for small $h$. We have used the notation of differentiation; $\partial_t := \partial/\partial t$ and $\Delta := \sum_{i=1}^n \partial_i^2$, where $\partial_i := \partial/\partial x_i$ for $i = 1, \ldots, n$.

The aim of this paper is to establish the well-posedness theory and some basic properties of solutions to (BZ), in terms of functional analysis. Although it has already been known the solvability of (BZ) in the abstract setting of $L^2$-framework by Yagi and his collaborators \cite{TY06, YOS09}, we will give a rigorous proof of the existence of time-global unique non-negative classical solutions in $L^\infty$-setting. In our framework, we may treat more various data, including the trivial solution. Thanks to this, it is possible to prove the instability of the trivial solution. Our techniques seem to be applicable for the similar situation in domains with periodic or inhomogeneous Dirichlet or Neumann boundary conditions. Furthermore, the invariant region and large time behavior of solutions are concerned. For applying the estimates of maximum principle type, we argue certain successive approximation of solutions. To obtain uniform bounds, and to ensure positivity, some estimates for semigroups and time evolution operators are derived by arguments of relatively compact perturbation from Laplacian, via smoothing properties of the heat semigroup.

Throughout this paper, the approach of functional analysis is employed. Due to the semigroup theory, the first two equations of (BZ) are formally equivalent to the integral equations:
\begin{align}
  u(t) & = e^{t \Delta} u_0 + \int^t_0 e^{(t-s) \Delta} \! \left[ \frac{u(s) \{ 1 - u(s) \}}{\varepsilon} - h v(s) \frac{u(s)-q}{u(s)+q} \right] \! ds, \label{int-u} \\
  v(t) & = e^{d t \Delta} v_0 + \int^t_0 e^{d (t-s) \Delta} \left[ -v(s)+u(s) \right] ds,  \label{int-v}
\end{align}
since $\Delta$ generates a $(C_0)$ semigroup $\{ e^{t \Delta} \}_{t \geq 0}$ in $BUC ({\mathbb R}^n)$, so-called the heat semigroup. We will give the definition of function spaces in section~3, as well as the semigroups. To show the uniqueness, this expression is useful. Once we establish the existence of solutions to the integral equations (\ref{int-u}) and (\ref{int-v}), it is easy to confirm that solutions to the integral equations satisfy (BZ) in the classical sense by the standard argument from smoothing property of the heat semigroup.

While we denote $L := d \Delta - 1$, it is easy to see that $L$ also generates a $(C_0)$ semigroup $\{ e^{t L} \}_{t \geq 0}$ in $BUC({\mathbb R}^n)$, having an exponential decay estimate in $t \to \infty$. So, the second equation of (BZ) is rewritten as
\begin{equation}\label{int-L}
  v(t) = e^{t L} v_0 + \int^t_0 e^{(t-s) L} u(s) ds.
\end{equation}
The expression (\ref{int-L}) is benefit for proving the positivity of $v$ from positivities of $v_0$ and $u$. Our main issue is to ensure the positivity of $u$.

This paper is organized as follows. In section~2, we will state the main results. Section~3 is to recall basic properties of semigroups and time evolution operators. We will give a proof of time-local solvability in section~4. A complete proof of an invariant region of solutions is obtained in section~5, as well as a priori estimates for the extension of solutions time-globally.

Throughout this paper, we denote positive constants by $C$ the value of which may differ from one occasion to another.

%
%

\vspace{2mm}
\noindent
{\bf Acknowledgment.} The authors would like to express their hearty gratitude to Professor Masaharu Nagayama for attracting them this problem, and for letting them know some results on (BZ).

%
%

\section{Main Results}
This section is devoted to state the main results of this note.

\begin{theorem}\label{th}
Let $n \in {\mathbb N}$, $\varepsilon, h, d > 0$, and let $q \in (0, 1)$. Put $\bar u \in (q, 1)$ is a root of $g(u) := u (1 - u) (u + q) - \varepsilon h q (u - q) = 0$, and $S := (q, \bar u)^2$. Let $u_0, v_0 \in BUC({\mathbb R}^n)$.

\noindent ${\mathrm{(i)}}$ \, If $u_0(x) \geq 0$ and $v_0(x) \geq 0$ for $x \in {\mathbb R}^n$, then there exists a pair $(u, v)$ of time-global unique non-negative classical solutions to ${\mathrm{(BZ)}}$ in $C([0, \infty); BUC ({\mathbb R}^n))$.

\noindent ${\mathrm{(ii)}}$ \, If $(u_0(x), v_0(x)) \in S$ for $x \in {\mathbb R}^n$, then $(u(x, t), v(x, t)) \in S$ for $x \in {\mathbb R}^n$ and $t > 0$.

\noindent ${\mathrm{(iii)}}$ \, If $u(x, t_\ast) \geq c_\ast$ and $v(x, t_\ast) \geq c_\ast$ for $x \in {\mathbb R}^n$ with some $t_\ast \geq 0$ and $c_\ast > 0$, then there exists a $T_\sharp \geq t_\ast$ such that $(u(x, t), v(x, t)) \in S$ for $x \in {\mathbb R}^n$ and $t \geq T_\sharp$.
\end{theorem}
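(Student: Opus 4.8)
The plan is to regard parts (i) and (ii) as the groundwork and to focus the real work on (iii). For (i) I would construct a time-local mild solution of (\ref{int-u})--(\ref{int-L}) by successive approximation in $C([0,T];BUC({\mathbb R}^n))$; the iteration contracts for small $T$ because $u\mapsto (u-q)/(u+q)$ is Lipschitz on $u\ge 0$ (the denominator is bounded below by $q>0$), and the local solution extends globally once the a priori bounds below are in hand. Non-negativity of $v$ is immediate from (\ref{int-L}) and the positivity of $\{e^{tL}\}$, while non-negativity of $u$ follows from the maximum principle after noting that the reaction obeys $R(0,v):=\varepsilon^{-1}\cdot 0\cdot(1-0)-hv\,\frac{0-q}{0+q}=hv\ge 0$, so $u\equiv 0$ serves as a lower barrier. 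For (ii) I would check that the reaction field points inward on each edge of $S=(q,\bar u)^2$ and invoke the invariant-region principle for this diagonally-diffusive system: on $u=q$ one has $R(q,v)=\varepsilon^{-1}q(1-q)>0$; on $v=q$ and $v=\bar u$ the sign of $u-v$ is correct since $q\le u\le\bar u$; and on $u=\bar u$ the identity $\varepsilon(\bar u+q)R(\bar u,q)=g(\bar u)=0$ together with the monotonicity of $R$ in $v$ gives $R(\bar u,v)\le R(\bar u,q)=0$. This is exactly the role of the defining equation $g(\bar u)=0$.

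For (iii) the key reduction is that, by the invariance of (ii) applied with initial time $T_\sharp$, it suffices to prove that the solution enters $S$ at some finite time. Accordingly I would track the spatial extrema
\[
  \underline u(t):=\inf_x u(x,t),\quad \overline u(t):=\sup_x u(x,t),\quad \underline v(t):=\inf_x v(x,t),\quad \overline v(t):=\sup_x v(x,t)
\]
and show they eventually satisfy $q<\underline u\le\overline u<\bar u$, and likewise for $v$. The tool is a maximum-principle comparison: at a spatial maximum of $u$ one has $\Delta u\le 0$, so $\frac{d^+}{dt}\overline u\le R(\overline u,\underline v)$ (using that $R(\overline u,\cdot)$ is non-increasing once $\overline u>q$), dually $\frac{d^+}{dt}\underline u\ge R(\underline u,\cdot)$ at the relevant value of $v$, while the $v$-equation gives $\frac{d^+}{dt}\overline v\le\overline u-\overline v$ and $\frac{d^+}{dt}\underline v\ge\underline u-\underline v$. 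These reduce the asymptotics to scalar logistic-type comparison ODEs.

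I would then run the comparison in two stages. Crude upper bounds: for $u>q$ the coupling term is $\le 0$, so $\frac{d^+}{dt}\overline u\le\varepsilon^{-1}\overline u(1-\overline u)$, whence $\limsup_t\overline u\le 1$ and then $\limsup_t\overline v\le 1=:\overline v_\infty$. Lower bounds: here the hypothesis $c_\ast>0$ is essential, since $(u,v)\equiv(0,0)$ is stationary; starting from $\underline u(t_\ast)\ge c_\ast$ the inequality $\frac{d^+}{dt}\underline u\ge\varepsilon^{-1}\underline u(1-\underline u)$ (valid while $\underline u\le q$, the coupling being then $\ge 0$) drives $\underline u$ across $q$ in finite time, after which comparison with $\dot w=\varepsilon^{-1}w(1-w)-h\,\overline v_\infty\frac{w-q}{w+q}$ yields $\liminf_t\underline u\ge w_{\ast\ast}>q$, because $R(q,\cdot)>0$ keeps the stable root above $q$; hence $\liminf_t\underline v\ge\liminf_t\underline u>q=:$ a bound $\underline v_\infty>q$. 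Refinement: with $\underline v$ eventually at least $\underline v_\infty>q$, comparison of $\overline u$ with $\dot w=\varepsilon^{-1}w(1-w)-h\,\underline v_\infty\frac{w-q}{w+q}$ gives $\limsup_t\overline u\le w^\star<\bar u$, since at $w=\bar u$ the right-hand side has the sign of $\bar u(1-\bar u)(\bar u+q)-\varepsilon h\,\underline v_\infty(\bar u-q)<g(\bar u)=0$; then $\limsup_t\overline v\le\limsup_t\overline u<\bar u$. Combining the stages places all four extrema in $(q,\bar u)$ for all large $t$, the desired finite-time entry into $S$.

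The main obstacle is making the extremal differential inequalities rigorous in the $BUC$-setting, where the spatial supremum and infimum need neither be attained nor be differentiable in $t$. I expect to replace the heuristic $\frac{d^+}{dt}$ computations by the integral representations (\ref{int-u}) and (\ref{int-L}) together with the positivity and constant-preserving properties of $\{e^{t\Delta}\}$ and $\{e^{tL}\}$ already used in (i)--(ii), so that the comparisons are carried out at the level of the mild formulation rather than pointwise. A secondary difficulty is that the two scalar comparisons for $u$ and $v$ are genuinely coupled through $\overline v_\infty$ and $\underline v_\infty$; the ordering above is arranged so that each estimate feeds the next in a single direction (upper $u\to$ upper $v\to$ lower $u\to$ lower $v\to$ upper $u$), avoiding circularity and closing the argument without an infinite iteration.
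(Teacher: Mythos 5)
Your proposal is correct and follows essentially the same route as the paper: a positivity-preserving successive approximation plus the maximum principle for (i), a first-touching/inward-pointing-field argument on $\partial S$ using $g(\bar u)=0$ and $v\geq q$ for (ii), and for (iii) the same chain of scalar logistic-type sub- and supersolution comparisons (drive $\underline u$ past $q$, feed that into $\underline v$, then use $\underline v>q$ and $g(\bar u)=0$ to pull $\overline u$ below $\bar u$ and hence $\overline v$ as well), concluding by the invariance from (ii). The only cosmetic differences are that the paper uses the global bound $m$ from its Proposition~\ref{wp} where you use $\limsup_t\overline v\leq 1$, and that it builds non-negativity into each iterate via the time evolution operators $U_\ell(t,s)$ rather than truncating the nonlinearity and applying the barrier argument to the limit.
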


Let us introduce the notion of an invariant region. A set $\Omega \subset {\mathbb R}^2$ is called an invariant region, if a pair $(u, v)$ of solutions to (BZ) always remains in $\Omega$. Theorem~\ref{th}-(i) implies that $[0, \infty)^2$ is an invariant region. Furthermore, the assertion (ii) tells us that the square domain $S := (q, \bar u)^2$ is an invariant region. It will be seen that $[0, m]^2$ for $m \geq 1$ are also invariant regions in Proposition~\ref{wp} in section~5.

We easily notice that there are two non-negative steady states (solutions independent of $x$ and $t$): the trivial solution $(0, 0)$ and a non-trivial one $(\widetilde u, \widetilde u)$, where $\widetilde u$ is a positive root of $\widetilde g(u) := (1-u)(u+q) - \varepsilon h(u-q) = 0$. Note that $(\widetilde u, \widetilde u) \in S$. The reader may find the linear stability or instability theories around $(\widetilde u, \widetilde u)$ in \cite{YOS09}. In addition, the assertion (iii) leads us to give large time behaviors of solutions. In fact, some global attractors are in $S$. Moreover, the trivial solution is clearly instable, which follows from the strong maximum principle.

For proving the existence theory, one can release the condition of uniform continuity for initial data. Indeed, for $u_0, v_0 \in L^\infty ({\mathbb R}^n)$, there exists a pair of time-global unique smooth non-negative solutions to (BZ). However, in this case, there is a lack of the continuity of solutions in time at $t=0$. So, the solutions belong to $C_w((0, \infty); L^\infty ({\mathbb R}^n))$, i.e., $C([\delta, \infty); L^\infty ({\mathbb R}^n))$ for $\delta > 0$.

For proving Theorem~\ref{th}-(i), we first show the existence of time-local unique non-negative classical solutions. To construct time-local solutions, the key idea is to use the certain successive approximation; see section~4. One may easily see that the solution is smooth in $t$ and $x$. Once we obtain time-local well-posedness, it is rather easy to extend the solution time-globally, since a priori bounds are derived uniformly in time and space by the maximum principle. Global bounds of solutions follow from the behaviors of those to the corresponding ordinary differential equations of the logistic type.

%
%

\section{Semigroups and Time Evolution Operators}
In this section, we recall definition of function spaces and properties of the heat semigroups as well as time evolution operators.

Let $n \in {\mathbb N}$, $1 \leq p < \infty$, and let $L^p := L^p ({\mathbb R}^n)$ be the space of all $p$-th integrable functions in ${\mathbb R}^n$ with the norm $\displaystyle \| f \|_p := \left( \int_{{\mathbb R}^n} |f(x)|^p dx \right)^{1/p}$. We often omit the notation of domain $({\mathbb R}^n)$, if no confusion occurs likely. Furthermore, we do not distinguish scalar valued functions and vector, as well as function spaces. Let $L^\infty$ be the space of all bounded functions with the norm $\| f \|_\infty := {\mathrm{ess}}.\sup_{x \in {\mathbb R}^n} | f(x) |$. Define $BUC$ as the space of all bounded uniformly continuous functions. Since $L^\infty$ is a Banach space, so is its closed subset $BUC$, as well as $C(I; BUC)$ for closed interval $I \subset {\mathbb R}$. For $k \in {\mathbb N}$, let $W^{k, \infty}$ be a set of all bounded functions whose $k$-th derivatives are also bounded.

In the whole space ${\mathbb R}^n$, for $w_0 \in L^\infty ({\mathbb R}^n)$ the heat equation
\[
  ({\mathrm{H}}) \,\, \left\{
  \begin{array}{ll}
    \displaystyle \partial_t w = \Delta w \quad & {\text{in}} \quad {\mathbb R}^n \!\times\! (0, \infty), \\
    w|_{t=0} = w_0 \quad & {\text{in}} \quad {\mathbb R}^n
  \end{array}
  \right.
\]
admits a time-global unique smooth solution
\begin{align*}
  w & := w(t) := w(x, t) := (e^{t \Delta} w_0) (x) := (H_t \ast w_0) (x) \\
  & := \int_{{\mathbb R}^n} (4 \pi t)^{-n/2} \exp(-|x-y|^2/4t) w_0(y) dy
\end{align*}
in $C_w((0, \infty); L^\infty)$, where $H_t := H_t (x) := (4 \pi t)^{-n/2} \exp(-|x|^2/4t)$ is the heat kernel. Since $\| H_t \|_1 = 1$ for $t > 0$, by Young's inequality we have $\| w (t) \|_\infty \leq \|w_0\|_\infty$ for $t > 0$. In particular, if $w_0(x) \geq c$ for all $x \in {\mathbb R}^n$ with some $c \in {\mathbb R}$, then $w(x, t) \geq c$ holds true for $x \in {\mathbb R}^n$ and $t > 0$; so-called the maximum principle. Furthermore, if additionally $w_0 \in BUC$ and $w_0 \not\equiv c$, then $w(x, t) > c$ for $x \in {\mathbb R}^n$ and $t > 0$; so-called the strong maximum principle.

We easily see that for $k \in {\mathbb N}$, there exists a positive constant $C$ such that $\| \partial_i^k e^{t \Delta} w_0 \|_\infty \leq C t^{-k/2} \| w_0 \|_\infty$ for $t > 0$ and $1 \leq i \leq n$. So, $w(t) \in C^k$ for $k \in {\mathbb N}$ and $t >0$, which implies that $w(t) \in C^\infty ({\mathbb R}^n)$ for $t >0$, and then $w \in C^\infty ({\mathbb R}^n \times (0, \infty))$.

In general, for $w_0 \in L^\infty$, there is a lack of the continuity of solutions to (H) in time at $t = 0$. Note that $e^{t \Delta} w_0 \to w_0$ in $L^\infty$ as $t \to 0$, if and only if $w_0 \in BUC$. The reader may find its proof in e.g. \cite{GIM99}. Indeed, if $w_0 \in BUC$, then the solution $w \in C([0, \infty); BUC)$.

Let us consider the following initial value problem associated with the second equation of (BZ):
\[
  ({\mathrm{P_V}}) \,\, \left\{
  \begin{array}{ll}
    \partial_t \psi = d \Delta \psi - \psi + \varphi \quad & {\text{in}} \quad {\mathbb R}^n \!\times\! (0, \infty), \\
    \psi|_{t=0} = \psi_0 \quad & {\text{in}} \quad {\mathbb R}^n.
  \end{array}
  \right.
\]
Here, $\varphi := \varphi (x, t)$ is a given bounded function. We are now position to state the time-global solvability of this problem, and derive upper and lower bounds for the solutions $\psi$.

\begin{lemma}\label{lem-v}
Let $n \in {\mathbb N}$, $d > 0$, $c \geq 0$, and let $\varphi \in L^\infty ({\mathbb R}^n \times [0, \infty))$ with $\varphi (x, t) \geq c$ for $x \in {\mathbb R}^n$ and $t > 0$. If $\psi_0 \in BUC$ with $\psi_0(x) \geq c$ for $x \in {\mathbb R}^n$, then there exists a time-global unique solution to ${\mathrm{(P_V)}}$ in $C([0, \infty); BUC)$ with $\psi(x, t) \geq c$ for $x \in {\mathbb R}^n$ and $t > 0$, enjoying
\begin{equation}\label{psi}
  \| \psi (t) \|_\infty \leq \| \psi_0 \|_\infty + t  \max_{0 \leq \tau \leq t} \| \varphi (\tau) \|_\infty \quad \text{for} \,\,\, t > 0.
\end{equation}
\end{lemma}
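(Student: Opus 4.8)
The plan is to reduce $(\mathrm{P_V})$ to a mild formulation governed by the semigroup $\{ e^{tL} \}_{t \geq 0}$ with $L = d \Delta - 1$, and then to read off all the assertions from the resulting Duhamel representation. Since $e^{tL} = e^{-t} e^{t d \Delta}$, this semigroup inherits from the heat semigroup both the decay estimate $\| e^{tL} f \|_\infty \leq e^{-t} \| f \|_\infty$ and, because the heat kernel $H_t$ is positive, the preservation of order: $f \geq 0$ pointwise implies $e^{tL} f \geq 0$. The candidate solution is the analogue of (\ref{int-L}) with $u$ replaced by the given source $\varphi$, namely
\[
  \psi(t) = e^{tL} \psi_0 + \int_0^t e^{(t-s)L} \varphi(s) \, ds .
\]
I would first verify that this formula indeed yields a classical solution, and then extract the bound, the positivity, and the uniqueness.

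For existence and regularity, the first term lies in $C([0, \infty); BUC)$ by the strong continuity of $\{ e^{tL} \}$ on $BUC$ and $\psi_0 \in BUC$. The Duhamel term $\Phi(t) := \int_0^t e^{(t-s)L} \varphi(s) \, ds$ is the delicate piece and is handled by parabolic smoothing: from the gradient estimate of the heat semigroup one gets $\| \partial_i e^{(t-s)L} \varphi(s) \|_\infty \leq C (t-s)^{-1/2} \| \varphi(s) \|_\infty$, and since $(t-s)^{-1/2}$ is integrable near $s = t$ this yields $\| \partial_i \Phi(t) \|_\infty \leq C \sqrt{t} \, \max_{0 \leq \tau \leq t} \| \varphi(\tau) \|_\infty$. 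Hence $\Phi(t) \in W^{1, \infty} \subset BUC$ for each $t > 0$, and continuity of $\Phi$ in $t$ (including $\Phi(0) = 0$) follows from the same uniform bounds. By the $C^\infty$ smoothing of the heat semigroup, $\psi(t) \in C^\infty$ for $t > 0$ and satisfies $(\mathrm{P_V})$ in the classical sense.

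The remaining three points are then short. The bound (\ref{psi}) follows by estimating the representation in $L^\infty$ and discarding the decay factors $e^{-(t-s)} \leq 1$, so that $\| \psi(t) \|_\infty \leq \| \psi_0 \|_\infty + \int_0^t \| \varphi(s) \|_\infty \, ds \leq \| \psi_0 \|_\infty + t \max_{0 \leq \tau \leq t} \| \varphi(\tau) \|_\infty$. For positivity I would set $\phi := \psi - c$, which solves the same problem with data $\psi_0 - c \geq 0$ and source $\varphi - c \geq 0$; its Duhamel formula is then a sum of two terms, each nonnegative by the order preservation of $e^{tL}$, so $\phi \geq 0$, that is $\psi(x,t) \geq c$. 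Uniqueness follows because the difference of two solutions satisfies the homogeneous problem with zero initial data, hence equals $e^{tL} \cdot 0 = 0$; a Gronwall argument on the integral equation gives the same conclusion.

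The step I expect to require the most care is the regularity of the Duhamel term $\Phi$ in $BUC$. The subtlety is that $\varphi$ is assumed only $L^\infty$ in time, with no continuity in $t$, so $s \mapsto e^{(t-s)L} \varphi(s)$ cannot a priori be treated as a continuous $BUC$-valued map and the naive Bochner-integral argument fails. The resolution is exactly the integrable singularity $(t-s)^{-1/2}$ in the gradient bound above, which forces $\Phi(t)$ to be Lipschitz in $x$, hence uniformly continuous, and which simultaneously delivers the continuity in $t$ — all without any time regularity of $\varphi$.
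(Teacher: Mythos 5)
Your argument is correct and follows essentially the same route as the paper: the same semigroup $e^{tL} = e^{-t} e^{d t \Delta}$, the same Duhamel representation $\psi(t) = e^{tL}\psi_0 + \int_0^t e^{(t-s)L}\varphi(s)\,ds$, the $L^\infty$ estimate with the decay factor discarded for the bound (\ref{psi}), and the shift $\psi - c$ combined with order preservation of the semigroup for the lower bound. The one overreach is your closing claim that $\psi$ is a classical ($C^\infty$) solution: with $\varphi$ merely bounded, the second spatial derivative of the Duhamel term produces a nonintegrable factor $(t-s)^{-1}$, which is exactly why the paper's Remark~1-(i) requires $\varphi \in L^\infty([0,\infty); W^{1,\infty})$ for classical regularity; since the lemma only asserts a (mild) solution in $C([0,\infty); BUC)$, this does not affect the validity of your proof of the stated result.
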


\begin{proof}
Let $L := d \Delta - 1$. One may see that $L$ generates a $(C_0)$ semigroup $\{ e^{tL} \}_{t \geq 0}$ in $BUC$ with
\[
  \| e^{tL} \|_{{\mathcal L} (L^\infty)} := \| e^{tL} \|_{L^\infty \to L^\infty} := \sup_{\psi_0 \in L^\infty, \not\equiv 0} \frac{\| e^{tL} \psi_0 \|_\infty}{\| \psi_0 \|_\infty} \leq e^{-t} \quad {\text{for}} \,\,\, t > 0,
\]
since $e^{tL} = e^{-t} e^{d t \Delta}$. So, for $\psi_0 \in BUC$, $({\mathrm{P_V}})$ is written as
\begin{equation}\label{psi-int}
  \psi(t) = e^{tL} \psi_0 + \int_0^t e^{(t-s)L} \varphi (s) ds.
\end{equation}
The existence of a time-global unique solution follows from this formula. Taking $L^\infty$-norm into (\ref{psi-int}) above, the upper bound estimate (\ref{psi}) is easily obtained.

We next show the lower bound. If $\varphi \equiv \psi_0 \equiv c$, then $\psi \equiv c$ is a unique solution to $({\mathrm{P_V}})$. So, by (\ref{psi-int}), $\chi := \psi - c$ satisfies
\[
  \chi (t) = e^{t L} (\psi_0 - c) + \int_0^t e^{(t-s) L} \left\{ \varphi (s) - c \right\} ds \geq 0
\]
for $x \in {\mathbb R}^n$ and $t > t_\flat$. Thus, $\psi \geq c$.
\end{proof}

\begin{remark}{\rm
(i)~If $\varphi$ has some regularity, e.g. $\varphi \in L^\infty ([0, \infty); W^{1, \infty})$, then $\psi$ becomes a classical solution; $C^1$ in $t$ and $C^2$ in $x$; see the proof of Lemma~\ref{lem-c} in below. Moreover, if $\varphi$ is smooth in $t$ and $x$, then the solution $\psi$ is also smooth in $t$ and $x$.

\noindent (ii)~If either $\varphi(x, t) > c$ in some open set around $x_\flat \in {\mathbb R}^n$ and $t_\flat \in [0, \infty)$ or $\psi_0 \not\equiv c$, then $\psi(x, t) > 0$ for $x \in {\mathbb R}^n$ and $t > t_\flat$ by the strong maximum principle.
}\end{remark}

In what follows, we recall some theories and estimates for time evolution operators. Let us consider the following autonomous problem:
\[
  ({\mathrm{P_A}}) \,\, \left\{
  \begin{array}{ll}
    \partial_t \xi = \Delta \xi - \eta(x, t) \xi \quad & {\text{in}} \quad {\mathbb R}^n \!\times\! (0, \infty), \\
    \xi|_{t=0} = \xi_0 \quad & {\text{in}} \quad {\mathbb R}^n.
  \end{array}
  \right.
\]
Here, $\eta := \eta (x, t)$ is a given bounded function. We now establish the time-local solvability of $({\mathrm{P_A}})$ with upper bounds of $\xi(t)$.

\begin{lemma}\label{lem-a}
Let $n \in {\mathbb N}$, $a > 0$. Assume that $\eta \in L^\infty ({\mathbb R}^n \times [0, \infty))$ with $|\eta (x, t)| \leq a$ for $x \in {\mathbb R}^n$ and $t > 0$. If $\xi_0 \in BUC$, then there exist a $T_\ast > 0$ and a time-local unique solution to $({\mathrm{P_A}})$ in $C([0, T_\ast]; BUC)$, having $\displaystyle \| \xi (t) \|_\infty \leq \frac{4}{3} \| \xi_0 \|_\infty$ holds for $t \in [0, T_\ast]$.
\end{lemma}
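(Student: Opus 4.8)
The plan is to recast $(\mathrm{P_A})$ in its mild (Duhamel) form and solve it by a contraction argument in $C([0,T];BUC)$. Writing the equation as $\partial_t\xi = \Delta\xi - \eta\xi$ and absorbing the reaction term into the heat semigroup, the problem is formally equivalent to the integral equation
\[
  \xi(t) = e^{t\Delta}\xi_0 - \int_0^t e^{(t-s)\Delta}\big[\eta(s)\xi(s)\big]\,ds.
\]
I would let $\Phi$ denote the map sending $\xi$ to the right-hand side and seek a fixed point in the closed ball
\[
  B_R := \Big\{ \xi \in C([0,T];BUC) : \sup_{0\le t\le T}\|\xi(t)\|_\infty \le R \Big\}, \qquad R := \tfrac43\|\xi_0\|_\infty,
\]
for a time $T=T_\ast$ to be fixed below.

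For the a priori bounds, recall from Section~3 that $\|H_t\|_1=1$, so Young's inequality gives $\|e^{t\Delta}\|_{{\mathcal L}(L^\infty)}\le 1$ for all $t\ge 0$. Hence, for $\xi\in B_R$, the hypothesis $|\eta|\le a$ yields
\[
  \|(\Phi\xi)(t)\|_\infty \le \|\xi_0\|_\infty + \int_0^t \|\eta(s)\xi(s)\|_\infty\,ds \le \|\xi_0\|_\infty + a\,T\,R.
\]
Choosing $T_\ast \le 1/(4a)$ makes the right-hand side at most $\|\xi_0\|_\infty + \tfrac14 R = \|\xi_0\|_\infty + \tfrac13\|\xi_0\|_\infty = \tfrac43\|\xi_0\|_\infty = R$, so $\Phi$ maps $B_R$ into itself and the stated bound $\|\xi(t)\|_\infty \le \tfrac43\|\xi_0\|_\infty$ holds automatically for the fixed point. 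The same computation applied to a difference gives, for $\xi,\widetilde\xi\in B_R$,
\[
  \sup_{0\le t\le T_\ast}\|(\Phi\xi)(t)-(\Phi\widetilde\xi)(t)\|_\infty \le a\,T_\ast\sup_{0\le t\le T_\ast}\|\xi(t)-\widetilde\xi(t)\|_\infty \le \tfrac14\sup_{0\le t\le T_\ast}\|\xi(t)-\widetilde\xi(t)\|_\infty,
\]
so $\Phi$ is a contraction. Banach's fixed point theorem then furnishes a unique fixed point in $B_R$, i.e.\ a time-local solution with the asserted estimate. Uniqueness in the whole of $C([0,T_\ast];BUC)$, not merely within the ball, follows by subtracting two solutions and applying Gronwall's inequality to $\|\xi(t)-\widetilde\xi(t)\|_\infty \le a\int_0^t\|\xi(s)-\widetilde\xi(s)\|_\infty\,ds$.

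The step needing the most care is verifying that $\Phi$ actually lands in $C([0,T];BUC)$, since $\eta$ is merely bounded measurable and the product $\eta(s)\xi(s)$ need not be uniformly continuous. Here I would exploit the smoothing of the heat semigroup recalled in Section~3: for $t>s$ the function $e^{(t-s)\Delta}[\eta(s)\xi(s)]$ is smooth with $\|\partial_i e^{(t-s)\Delta}[\eta(s)\xi(s)]\|_\infty \le C(t-s)^{-1/2}\|\eta\|_\infty\|\xi(s)\|_\infty$, hence lies in $BUC$; the singularity $(t-s)^{-1/2}$ is integrable in $s$, so the Duhamel integral defines a $BUC$-valued function. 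Continuity in $t$ then follows from this uniform bound together with dominated convergence, while the leading term $e^{t\Delta}\xi_0$ belongs to $C([0,\infty);BUC)$ precisely because $\xi_0\in BUC$. I expect this measurability-and-continuity bookkeeping, rather than the fixed-point estimates, to be the only genuinely technical point.
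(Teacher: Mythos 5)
Your proposal is correct and follows essentially the same route as the paper: the paper runs the Picard iteration $\xi_{\ell+1}(t)=e^{t\Delta}\xi_0-\int_0^t e^{(t-s)\Delta}\eta(s)\xi_\ell(s)\,ds$ on $[0,1/4a]$ with the same $\tfrac43\|\xi_0\|_\infty$ bound and Gronwall uniqueness, which is just the successive-approximation form of your contraction argument. Your explicit verification that the Duhamel integral lands in $C([0,T_\ast];BUC)$ via the gradient smoothing estimate is a point the paper leaves implicit, but it is not a difference of method.
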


\begin{proof}
The proof is based on the standard iteration. Set $\xi_1(t) := e^{t \Delta} \xi_0$,
\[
  \xi_{\ell+1}(t) := e^{t \Delta} \xi_0 - \int_0^t e^{(t-s) \Delta} \eta(s) \xi_\ell (s) ds
\]
for $\ell \in {\mathbb N}$, successively. Obviously, $\| \xi_1 (t) \|_\infty \leq \| \xi_0 \|_\infty$ for $t > 0$. Taking $\| \cdot \|_\infty$ into above, $\displaystyle \| \xi_{\ell+1} (t) \|_\infty \leq \frac{4}{3} \| \xi_0 \|_\infty$ holds for $\ell \in {\mathbb N}$, at least when $t \in [0, 1/4a]$. So, we also see that $\big\{ \xi_\ell \big\}_{\ell = 1}^\infty$ is a Cauchy sequence in $C ([0, T_\ast]; BUC)$ with some $T_\ast \geq 1/4a$. One can easily check that $\xi = \lim_{\ell \to \infty} \xi_\ell$ is a solution to $({\mathrm{P_A}})$. The uniqueness follows from the Gronwall inequality, as usual.
\end{proof}

Let $A := A(x, t) := \Delta - \eta (x, t)$, the solution to $({\mathrm{P_A}})$ can be rewritten as $\xi (t) = U (t, 0) \xi_0$, using time evolution operators $\big\{ U (t, s) \big\}_{0 \leq s \leq t}$ associated with $A$; see e.g. the book of Tanabe \cite{Tanabe79}. The upper bound above imples $\| U (t, 0) \|_{L^\infty \to L^\infty} \leq 4/3$ for $0 \leq t \leq 1/4a$, as well as $\| U (t, s) \|_{L^\infty \to L^\infty} \leq 4/3$ for $0 \leq s \leq t \leq 1/4a$.

We shall discuss a classical solution to $({\mathrm{P_A}})$. Let $\nabla := (\partial_1, \ldots, \partial_n)$.

\begin{lemma}\label{lem-c}
Adding the assumption in Lemma~$\ref{lem-a}$, suppose $t^{1/2} \nabla \eta (t) \in L^\infty ({\mathbb R}^n \times [0,\infty))$. Thus, $\xi$ is a classical solution to $({\mathrm{P_A}})$.
\end{lemma}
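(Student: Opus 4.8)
The plan is to upgrade the mild solution $\xi$ produced in Lemma~\ref{lem-a} --- which satisfies the Duhamel identity
\[
  \xi(t) = e^{t\Delta}\xi_0 - \int_0^t e^{(t-s)\Delta}\eta(s)\xi(s)\,ds
\]
together with the bound $\|\xi(t)\|_\infty \le \tfrac{4}{3}\|\xi_0\|_\infty$ --- to a classical one, by a bootstrap that exploits the smoothing estimates $\|\partial_i^k e^{t\Delta}w\|_\infty \le C t^{-k/2}\|w\|_\infty$ recorded in Section~3. The spatial regularity is the heart of the matter; once $\xi(t)\in C^2$ with $\Delta\xi$ continuous, the time regularity follows from the integral equation.

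\emph{First step: one spatial derivative.} Writing $f(s):=\eta(s)\xi(s)$, which is bounded by $a\cdot\tfrac{4}{3}\|\xi_0\|_\infty$, I would estimate
\[
  \|\nabla\xi(t)\|_\infty \le \|\nabla e^{t\Delta}\xi_0\|_\infty + \int_0^t \|\nabla e^{(t-s)\Delta}f(s)\|_\infty\,ds \le C t^{-1/2}\|\xi_0\|_\infty + C\int_0^t (t-s)^{-1/2}\,ds,
\]
so the single $(t-s)^{-1/2}$ singularity is integrable and $t^{1/2}\|\nabla\xi(t)\|_\infty$ is bounded on $[0,T_\ast]$. Combining this with the hypothesis $\|\nabla\eta(s)\|_\infty\le C s^{-1/2}$, the product rule yields $\|\nabla f(s)\|_\infty = \|(\nabla\eta)\xi + \eta\nabla\xi\|_\infty \le C s^{-1/2}$.

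\emph{Main step: the second spatial derivative.} The naive move of placing both derivatives on the semigroup costs $(t-s)^{-1}$, which is not integrable near $s=t$; the key is instead to split the derivatives, putting one onto the kernel and one onto the forcing,
\[
  \partial_i\partial_j \int_0^t e^{(t-s)\Delta}f(s)\,ds = \int_0^t \partial_i e^{(t-s)\Delta}\,\partial_j f(s)\,ds,
\]
which is legitimate precisely because the first step has made $f(\cdot,s)$ spatially $C^1$. Then $\|\partial_i e^{(t-s)\Delta}\partial_j f(s)\|_\infty \le C(t-s)^{-1/2}s^{-1/2}$, and the product of singularities is integrable since $\int_0^t (t-s)^{-1/2}s^{-1/2}\,ds = B(\tfrac12,\tfrac12)=\pi$. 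This shows $\partial_i\partial_j\xi(t)$ exists and is bounded on $(0,T_\ast]$; continuity of these second derivatives in $x$, hence $\xi(t)\in C^2$ and $\Delta\xi(t)$ well-defined, bounded, and continuous in $t$, follows from dominated convergence and continuity of the heat-kernel convolution. This Beta-function integrability trick, together with the justification that one may differentiate under the integral and transfer the derivative onto $f$, is where the assumption $t^{1/2}\nabla\eta\in L^\infty$ is essential, and I expect it to be the principal obstacle.

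\emph{Final step: time regularity.} With $\Delta\xi$ and $\eta\xi$ now continuous in $(x,t)$ for $t>0$, I would differentiate the Duhamel identity in $t$ to obtain $\partial_t\xi = \Delta\xi - \eta\xi$ in the classical sense, using $\partial_t e^{t\Delta}=\Delta e^{t\Delta}$ and the standard computation $\frac{d}{dt}\int_0^t e^{(t-s)\Delta}f(s)\,ds = f(t) + \int_0^t \Delta e^{(t-s)\Delta}f(s)\,ds$; alternatively one invokes the abstract parabolic regularity result that a mild solution with forcing continuous into $BUC$ and the above spatial smoothing is a genuine $C^1$-in-$t$, $C^2$-in-$x$ solution (cf.\ Tanabe \cite{Tanabe79}). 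This yields that $\xi$ solves $(\mathrm{P_A})$ classically on $(0,T_\ast]$, which is the assertion.
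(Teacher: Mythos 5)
Your proposal is correct and follows essentially the same route as the paper: a first-derivative bound $t^{1/2}\|\nabla\xi(t)\|_\infty \leq C$ from Duhamel, then the second derivatives estimated by placing one derivative on the heat kernel and one on the forcing $\eta\xi$, so that the hypothesis $t^{1/2}\nabla\eta \in L^\infty$ gives the integrable singularity $(t-s)^{-1/2}s^{-1/2}$, and finally time regularity read off from the integral equation. The paper's proof is just a terser version of the same argument, so no further comparison is needed.
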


\begin{proof}
Although the argument is rather standard, we give a proof. It is easy to see that $\|\nabla \xi (t) \|_\infty \leq C t^{-1/2}$ for $t \in [0, T_\ast]$ by
\[
  \xi (t) = e^{t \Delta} \xi_0 - \int_0^t e^{(t-s) \Delta} \eta(s) \xi (s) ds,
\]
taking $\nabla$ and $\|\cdot\|_\infty$ into above. So, the key is to derive estimates for the second spatial derivatives. One easily has
\begin{align*}
  \| \nabla^2 \xi (t) \|_\infty & \leq \| \nabla^2 e^{t \Delta} \xi_0 \|_\infty + \int_0^t \| \nabla^2 e^{(t-s) \Delta} \eta(s) \xi (s) \|_\infty ds \\
  & \leq t^{-1} \| \xi_0 \|_\infty + \int_0^t (t-s)^{-1/2} \| \nabla \left\{ \eta (s) \xi (s) \right\} \|_\infty ds \\
  & \leq t^{-1} \| \xi_0 \|_\infty + \int_0^t (t-s)^{-1/2} C s^{-1/2} ds
  \leq C t^{-1}
\end{align*}
for $t \in (0, T_\ast']$ with some $T_\ast' \leq \min \{ 1, T_\ast \}$ and constant $C$ depending only on $n$, $\| \xi_0 \|_\infty$, $\sup_{0 \leq \tau \leq T_\ast'} \| \eta (\tau) \|_\infty$ and $\sup_{0 \leq \tau \leq T_\ast'} \tau^{1/2} \| \nabla \eta (\tau) \|_\infty$. The estimate for $\partial_t \xi$ can also be derived, similarly. By uniqueness, $\xi$ becomes a classical solution as long as it exists, at least up to $T_\ast$.
\end{proof}

In here, a kind of linearized problem of the first equation of (BZ) with a non-autonomous term is considered.
\[
  ({\mathrm{P_N}}) \,\, \left\{
  \begin{array}{ll}
    \partial_t \xi = \Delta \xi - \eta(x, t) \{ \xi - c \} + \zeta(x, t) \quad & {\text{in}} \quad {\mathbb R}^n \!\times\! (0, \infty), \\
    \xi|_{t=0} = \xi_0 \quad & {\text{in}} \quad {\mathbb R}^n.
  \end{array}
  \right.
\]
Here, $\zeta := \zeta (x, t)$ is a given bounded function; $c \geq 0$ is a constant.

\begin{lemma}\label{lem-n}
Let $n \in {\mathbb N}$, $a$, $b > 0$ and $c \geq 0$. Assume that $\eta$, $\zeta \in L^\infty ({\mathbb R}^n \times [0, \infty))$ satisfying $t^{1/2} \nabla \eta$ and $t^{1/2} \nabla \zeta$ are bounded, $|\eta| \leq a$ and $0 < \zeta \leq b$ for $x \in {\mathbb R}^n$ and $t > 0$. If $\xi_0 \in BUC$ with $\xi_0(x) \geq c$ for $x \in {\mathbb R}^n$, then there exist a $T_\dagger > 0$ and a time-local unique classical solution to $({\mathrm{P_N}})$ in $C([0, T_\dagger]; BUC)$ with $\xi (x, t) > c$ for $x \in {\mathbb R}^n$ and $t \in [0, T_\dagger]$, having $\| \xi (t) \|_\infty \leq 2 \| \xi_0 \|_\infty$ holds for $t \in [0, T_\dagger]$.
\end{lemma}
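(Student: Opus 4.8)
The plan is to prove Lemma~\ref{lem-n} by a successive approximation scheme analogous to Lemma~\ref{lem-a}, but now carrying the inhomogeneous term $\zeta$ and the constant shift $c$, and then to extract the strict lower bound $\xi > c$ from the strong maximum principle. Let me sketch the reasoning and see where the difficulties lie.
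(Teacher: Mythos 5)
There is no proof here to evaluate: what you have written is a one-sentence declaration of strategy (``successive approximation analogous to Lemma~\ref{lem-a}, then the strong maximum principle'') followed by ``let me sketch the reasoning,'' and then the sketch never appears. Every substantive step of the lemma is left unaddressed. Concretely, you would need to (a) actually set up the iteration or, as the paper does, reduce to the shifted variable $\theta := \xi - c$ with $\theta_0 := \xi_0 - c \geq 0$ and write $\theta(t) = U(t,0)\theta_0 + \int_0^t U(t,s)\zeta(s)\,ds$ using the evolution operator from Lemma~\ref{lem-a}; (b) produce the quantitative bound $\|\xi(t)\|_\infty \leq 2\|\xi_0\|_\infty$, which forces a specific choice of $T_\dagger$ (the paper takes $T_\dagger \leq \min\{T_\ast, 1/2b\}$ so that $\tfrac{4}{3}\|\theta_0\|_\infty + \tfrac{4}{3} t \max_\tau\|\zeta(\tau)\|_\infty \leq 2\|\theta_0\|_\infty$); (c) explain why the solution is \emph{classical} --- this is exactly where the hypothesis that $t^{1/2}\nabla\eta$ and $t^{1/2}\nabla\zeta$ are bounded enters, via Lemma~\ref{lem-c}, and your plan never mentions it; and (d) run the touching-point argument for the lower bound, including the case where $\xi$ approaches $c$ only as $|x| \to \infty$, which the paper handles by Oleinik's technique.

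One further caution about the one idea you do state: the strong maximum principle alone does not give $\xi > c$, because the hypothesis allows $\xi_0 \equiv c$, in which case the heat-flow part contributes nothing strict. The strict inequality comes from the assumed strict positivity $\zeta > 0$: at a hypothetical first touching point $(\hat x, \hat t)$ one has $\partial_t \xi \leq 0$ on the left of $({\mathrm{P_N}})$ while $\Delta\xi \geq 0$, $\eta\{\xi - c\} = 0$ and $\zeta > 0$ on the right, a contradiction. If you intend to rely on the strong maximum principle you must at minimum explain how it applies when $\xi_0 \equiv c$; the cleaner route is the one just described. As it stands, the proposal is a plan, not a proof, and the gap is the entire argument.
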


\begin{proof}
Let $\theta := \xi - c$ and $\theta_0 := \xi_0 - c \geq 0$. So, $\theta$ satisfies
\[
    \partial_t \theta = \Delta \theta - \eta(x, t) \theta + \zeta(x, t), \qquad \theta|_{t=0} = \theta_0,
\]
which is also rewritten as
\begin{equation}\label{int-n}
  \theta(t) = U (t, 0) \theta_0 + \int_0^t U (t, s) \zeta (s) ds.
\end{equation}
By Lemma~\ref{lem-a} and \ref{lem-c}, we can show the existence of a time-local unique classical solution to (\ref{int-n}), having the upper bound estimate:
\begin{align*}
  \| \theta (t) \|_\infty & \leq \| U (t, 0) \theta_0 \|_\infty + \int_0^t \| U (t, s) \zeta (s) \|_\infty ds \\
  & \leq \frac{4}{3} \| \theta_0 \|_\infty + \frac{4}{3} t \max_{0 \leq \tau \leq t} \| \zeta (\tau) \|_\infty \leq 2 \|\theta_0\|_\infty
\end{align*}
for $t \in (0, T_\dagger]$ with some $T_\dagger \leq \min \{ T_\ast, 1 / 2 b \}$. Once we have $\xi(t) \geq c$, it is clear that $\| \xi(t) \|_\infty \leq 2 \| \xi_0 \|_\infty$ in $[0, T_\dagger]$.

The lower bound of solutions follows from the maximum principle for a classical solution. We suppose that there exists $(\hat x, \hat t) \in {\mathbb R} \times (0, T_\ast]$ such that $\xi (\hat x, \hat t) = c$. Without loss of generality, $\hat t$ is taken as the first time when $\xi$ touches to $c$. At $(\hat x, \hat t)$ we see that $\partial_t \xi < 0$ in the left hand side of $({\mathrm{P_N}})$, however, $\Delta \xi \geq 0$, $\zeta > 0$ and $\eta \{ \xi - c \} = 0$ in the right hand side. This contradicts to that $\xi$ is a classical solution to $({\mathrm{P_N}})$. We can apply Oleinik's technique to avoid the situation for the case $\xi (\hat x, \hat t) \to c$ as $|\hat x| \to \infty$; see \cite{IKO02} or \cite{GG99}. Note that even if $\theta_0 = \xi_0 - c \equiv 0$, then $\theta = \xi - c > 0$ by the positivity of $\zeta$. Therefore, $\xi (x, t) > c$ for $x \in {\mathbb R}^n$ and $t \in [0, T_\dagger]$.
\end{proof}

%
%

\section{Time-Local Solvability}
We give a complete proof of the time-local solvability in this section.

\begin{proposition}\label{prop}
Let $n \in {\mathbb N}$, $\varepsilon$, $h$, $d > 0$, and let $q \in (0, 1)$. If $u_0$, $v_0 \in BUC ({\mathbb R}^n)$ with $u_0(x) \geq 0$ and $v_0(x) \geq 0$ for $x \in {\mathbb R}^n$, then there exist $T_0 > 0$ and time-local unique classical solutions $(u, v)$ to ${\mathrm{(BZ)}}$ in $C ([0, T_0]; BUC ({\mathbb R}^n))$ with $0 \leq u (x, t) \leq 2 m$ and $0 \leq v (x, t) \leq 2 m$ for $x \in {\mathbb R}^n$ and $t \in [0, T_0]$, where $m := \max \{ \|u_0\|_\infty, \|v_0\|_\infty \}$. Furthermore, $T_0 \geq C/m$ with some constant $C > 0$ independent of $m$.
\end{proposition}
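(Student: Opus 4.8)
The plan is to realize $(u,v)$ as the limit of a successive approximation assembled from the linear problems $(\mathrm{P_V})$ and $(\mathrm{P_N})$, whose bounds and positivity are already furnished by Lemmas~\ref{lem-v}--\ref{lem-n}. The decisive preliminary is an algebraic rearrangement of the reaction term that exhibits a positivity-preserving linear structure. Using $\frac{u-q}{u+q}=\frac{u}{u+q}-\frac{q}{u+q}$, one rewrites
\[
  \frac{1}{\varepsilon}u(1-u)-hv\frac{u-q}{u+q}
  =-\left(\frac{u}{\varepsilon}+\frac{hv}{u+q}-\frac{1}{\varepsilon}\right)u+\frac{hqv}{u+q},
\]
so the first equation of (BZ) takes exactly the form of $(\mathrm{P_N})$ with $c=0$, coefficient $\eta=\frac{u}{\varepsilon}+\frac{hv}{u+q}-\frac{1}{\varepsilon}$ and source $\zeta=\frac{hqv}{u+q}\ge 0$. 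This is the structural observation that drives the whole argument: the denominator $u+q$ never vanishes as long as $u\ge 0$.

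With this in hand I would set $u_1:=e^{t\Delta}u_0$ and, given a non-negative pair $(u_k,v_k)$, define $v_{k+1}$ as the solution of $(\mathrm{P_V})$ with $\varphi=u_k$, $\psi_0=v_0$, and $u_{k+1}$ as the solution of $(\mathrm{P_N})$ with $\xi_0=u_0$ and
\[
  \eta_k:=\frac{u_k}{\varepsilon}+\frac{hv_k}{u_k+q}-\frac{1}{\varepsilon},\qquad
  \zeta_k:=\frac{hqv_k}{u_k+q}.
\]
I would then prove by induction that $0\le u_k,v_k\le 2m$ on a common interval $[0,T_0]$. Non-negativity of $v_{k+1}$ and the bound $\|v_{k+1}(t)\|_\infty\le m+2mt\le 2m$ for $t\le 1/2$ come directly from Lemma~\ref{lem-v} and \eqref{psi}; non-negativity of $u_{k+1}$ follows from the positivity-preserving evolution operator $U(t,s)$ associated with $\Delta-\eta_k$ acting on the non-negative data $u_0$ and non-negative source $\zeta_k$ (the maximum-principle content of Lemma~\ref{lem-n}), while $\|u_{k+1}(t)\|_\infty\le 2\|u_0\|_\infty\le 2m$ holds on $[0,T_\dagger]$. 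Because $u_k+q\ge q$ on the invariant box $[0,2m]^2$, the constants $a,b$ bounding $\eta_k$ and $\zeta_k$ are of size $O(m)$, and the smoothing estimate $\|\nabla e^{t\Delta}u_0\|_\infty\le Ct^{-1/2}\|u_0\|_\infty$ (together with the classical regularity of the later iterates) guarantees that $t^{1/2}\nabla\eta_k$ and $t^{1/2}\nabla\zeta_k$ are bounded, so Lemmas~\ref{lem-a}, \ref{lem-c}, \ref{lem-n} genuinely apply at each step. Tracking the constraints $T_\dagger\le\min\{T_\ast,1/2b\}$ with $T_\ast\ge 1/4a$ and $a,b=O(m)$ yields $T_0:=\min\{1/2,T_\dagger\}\ge C/m$.

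To obtain convergence I would estimate the differences $u_{k+1}-u_k$ and $v_{k+1}-v_k$ through the integral formulations, using that $(u,v)\mapsto(\eta,\zeta)$ is Lipschitz on $[0,2m]^2$ (finite precisely because $u+q\ge q$). This produces a recursion that is a contraction once $T_0$ is small of order $1/m$, so $\{(u_k,v_k)\}$ is Cauchy in $C([0,T_0];BUC)$. The limit $(u,v)$ satisfies the integral equations \eqref{int-u}--\eqref{int-v}; the smoothing property of the heat semigroup then upgrades it to a classical solution, and all the bounds $0\le u,v\le 2m$ pass to the limit. Uniqueness I would get in the usual way, subtracting two solutions and applying Gronwall's inequality after controlling the nonlinearity by its local Lipschitz constant on the common bound.

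The step I expect to be the main obstacle is closing the circular dependence inside the induction: controlling the rational nonlinearity requires the lower bound $u+q\ge q$, which is available only once non-negativity of the iterates is known, yet non-negativity is itself extracted from the positivity-preserving linear structure that presupposes boundedness of $\eta_k,\zeta_k$. Carrying the bounds $0\le u_k,v_k\le 2m$, the positivity, and the regularity through the induction simultaneously, while keeping every constant's dependence on $m$ explicit so that the contraction time and hence $T_0$ remain $\ge C/m$, is the delicate part of the argument.
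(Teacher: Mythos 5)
Your proposal is correct and follows the same overall strategy as the paper's proof: a successive approximation in which the second equation is solved by Lemma~\ref{lem-v}, the first by the linearized problem $({\mathrm{P_N}})$ through the time evolution operator of Lemmas~\ref{lem-a}--\ref{lem-n} (with Lemma~\ref{lem-c} supplying classical regularity), an induction carrying the bounds $0\le u_k,v_k\le 2m$ together with $t^{1/2}\|\nabla u_k\|_\infty,\,t^{1/2}\|\nabla v_k\|_\infty\le 2m$ on a common interval $[0,T_0]$ with $T_0\ge C/m$, and finally a Cauchy/contraction argument plus Gronwall for uniqueness. The one genuine difference is the splitting of the reaction term. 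The paper takes $\eta_\ell=\frac{hv_\ell}{u_\ell+q}$ and places the entire logistic part in the source, $\zeta_\ell=\frac{u_\ell(1-u_\ell)}{\varepsilon}+\frac{hqv_\ell}{u_\ell+q}$, so its scheme reads $\partial_t u_{\ell+1}=\Delta u_{\ell+1}+\frac{u_\ell(1-u_\ell)}{\varepsilon}-hv_\ell\frac{u_{\ell+1}-q}{u_\ell+q}$; you instead absorb $\frac1\varepsilon u(1-u)=-\bigl(\frac{u}{\varepsilon}-\frac1\varepsilon\bigr)u$ into the zeroth-order coefficient, leaving the source $\zeta_k=\frac{hqv_k}{u_k+q}$. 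Your choice makes $\zeta_k\ge0$ automatic on the box $[0,2m]^2$, whereas the paper's assertion $0\le\zeta_\ell$ is only clear when the iterates stay below $1$ (for $m>1$ the term $u_\ell(1-u_\ell)/\varepsilon$ can be negative, and the sign of the source is precisely what drives the lower bound $u_{\ell+1}\ge0$ at a touching point); the price is a larger, sign-changing $\eta_k$, which is harmless since it remains $O(m)$ and Lemma~\ref{lem-a} only requires $|\eta|\le a$, so both choices give $a,b=O(m)$ and hence $T_0\ge C/m$. The only point to watch in your version is that Lemma~\ref{lem-n} as stated assumes $\zeta>0$ strictly while your $\zeta_k$ may vanish where $v_k$ does; for the non-strict conclusion $u\ge0$ of the Proposition the weak maximum principle, i.e.\ positivity of $U_k(t,s)$ applied to the representation $u_{k+1}=U_k(t,0)u_0+\int_0^tU_k(t,s)\zeta_k(s)\,ds$, suffices, exactly as you indicate.
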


\begin{proof}
We employ an iteration argument. For making the approximation sequences, we begin with
\[
  u_1 (t) := e^{t \Delta} u_0 \quad \text{and} \quad v_1 (t) := e^{d t \Delta} v_0.
\]
For $\ell \in {\mathbb N}$, we successively define
\begin{align*}
  u_{\ell + 1} (t) & := U_\ell (t, 0) u_0 + \int_0^t U_\ell (t, s) \left[ \frac{u_\ell (s) \{ 1 - u_\ell (s) \}}{\varepsilon} + \frac{h q v_\ell (s)}{u_\ell(s) + q} \right] ds, \\
  v_{\ell + 1} (t) & := e^{t L} v_0 + \int_0^t e^{(t-s) L} u_\ell (s) ds.
\end{align*}
Here, $A_\ell := \Delta - \eta_\ell$ with $\displaystyle \eta_\ell (x,t) := \frac{h v_\ell (x, t)}{u_\ell (x, t) + q}$, and $\big\{ U_\ell (t, s) \big\}_{t \geq s \geq 0}$ is the time evolution operator associated with $A_\ell$. Note that $u_{\ell + 1}$ and $v_{\ell + 1}$ formally satisfy
\begin{equation}\label{u_l}
  \partial_t u_{\ell + 1} = A_\ell u_{\ell + 1} + \zeta_\ell = \Delta u_{\ell + 1} + \frac{u_\ell \{ 1 - u_\ell \}}{\varepsilon} - h v_\ell \frac{u_{\ell + 1}- q}{u_\ell + q}
\end{equation}
with $u_{\ell + 1}|_{t=0} = u_0$ and $\displaystyle \zeta_\ell (x, t) := \frac{u_\ell (x, t) \{ 1 - u_\ell (x, t) \}}{\varepsilon} + \frac{h q v_\ell (x, t)}{u_\ell (x, t) + q}$;
\begin{equation}\label{v_l}
  \partial_t v_{\ell + 1} = L v_{\ell + 1} + u_\ell = d \Delta v_{\ell + 1} - v_{\ell + 1} + u_\ell
\end{equation}
with $v_{\ell + 1}|_{t=0} = v_0$ for non-negative functions $u_\ell$ and $v_\ell$.

In what follows, we derive estimates for $u_\ell$, $v_\ell$, $\nabla u_\ell$ and $\nabla v_\ell$. We put
\begin{align*}
  K_{1, \ell} & := K_{1, \ell} (T) := \sup_{0 \leq t \leq T} \| u_\ell (t) \|_\infty, \\
  K_{2, \ell} & := K_{2, \ell} (T) := \sup_{0 \leq t \leq T} \| v_\ell (t) \|_\infty, \\
  K_{3, \ell} & := K_{3, \ell} (T) := \sup_{0 \leq t \leq T} t^{1/2} \| \nabla u_\ell (t) \|_\infty, \\
  K_{4, \ell} & := K_{4, \ell} (T) := \sup_{0 \leq t \leq T} t^{1/2} \| \nabla v_\ell (t) \|_\infty
\end{align*}
for $T > 0$ and $\ell \in {\mathbb N}$. For deriving the uniform estimates, we will use the induction argument for $\ell$.

For $\ell = 1$, by $0 \leq u_0 \leq m$ and $0 \leq v_0 \leq m$, we easily see that $0 \leq u_1 (t) \leq \| u_0 \|_\infty$, $0 \leq v_1 (t) \leq \| v_0 \|_\infty$, $t^{1/2} \| \nabla u_1 (t) \|_\infty \leq \| u_0 \|_\infty$ and $t^{1/2} \| \nabla v_1 (t) \|_\infty \leq \| v_0 \|_\infty$ for $t > 0$ by the maximum principle. Thus,
\begin{equation}\label{k1}
  K_{j, 1} \leq m \quad \text{for} \quad T > 0 \,\,\, \text{and} \,\,\, 1 \leq j \leq 4.
\end{equation}

For $\ell = 2$, before estimating $u_2$ and $v_2$, we give bounds for $\eta_1$ and $\zeta_1$. By $u_1 \geq 0$, $v_1 \geq 0$ and (\ref{k1}), it holds that
\[
  \| \eta_1 \|_\infty \leq \frac{h}{q} m =: a_1 \quad \text{and} \quad 0 \leq \zeta_1 \leq \frac{m(1+m)}{\varepsilon} + hm := b_1.
\]
So, by Lemma~\ref{lem-a} and \ref{lem-n}, it holds that
\begin{align*}
  \| u_2 (t) \|_\infty & \leq \| U_1 (t, 0) u_0 \|_\infty + \int_0^t \| U_1 (t, s) \zeta_1(s) \|_\infty ds \\
  & \leq \frac{4}{3} \| u_0 \|_\infty + \int_0^t \frac{4}{3} b_1 ds \leq 2 m
\end{align*}
provided if $t \leq T_{\dagger, 2}$ with some $T_{\dagger, 2} > 0$. Furthermore, since $u_2$ is a classical solution to (\ref{u_l}) with $\ell = 1$ by Lemma~\ref{lem-c} with $c = 0$, we can apply the maximum principle to obtain $u_2 (x, t) \geq 0$ for $x \in {\mathbb R}^n$ and $t \in [0, T_{\dagger, 2}]$. To get the estimate for $K_{3, 2} = \sup_t t^{1/2} \| \nabla u_2 (t) \|_\infty$, we use the expression by the heat semigroup:
\[
  u_2 (t) = e^{t \Delta} u_0 + \int_0^t e^{(t-s) \Delta} \left[ - \eta_1 (s) u_2 (s) + \zeta_1 (s) \right] ds.
\]
Hence, it holds that
\[
  t^{1/2} \| \nabla u_2 (t) \|_\infty \leq \| u_0 \|_\infty + t^{1/2} \int_0^t (t-s)^{-1/2} [ a_1 \| u_2(s) \|_\infty + b_1] ds \leq 2m
\]
for $t \in (0, T_{\dagger, 2}']$ with some $T_{\dagger, 2}' \leq T_{\dagger, 2}$. On the other hand, by Lemma~\ref{lem-v} with $c = 0$, it holds that
\[
  0 \leq v_2 (x, t) \leq \| v_0 \|_\infty + t \sup_{ 0 \leq \tau \leq t} \| v_1 (\tau) \|_\infty \leq 2m
\]
for $x \in {\mathbb R}^n$ and $t \in [0, 1]$. For deriving the estimate for $\nabla v_2$, we appeal to the heat semigroup, again, to have
\[
  t^{1/2} \| \nabla v_2 (t) \| \leq \| v_0 \| + t^{1/2} \int_0^t \| \nabla e^{(t-s) \Delta} \left[ - v_2 (s) + u_1 (s) \right] \|_\infty ds \leq 2m
\]
for $t \in (0, T_{\flat, 2}]$ with $T_{\flat, 2} \leq 1$. So, let $T_2 := \min \{ T_{\dagger, 2}', T_{\flat, 2} \}$, we have
\begin{equation}\label{k2}
  u_2 \geq 0, \quad v_2 \geq 0, \quad K_{j, 2} \leq 2m \quad \text{for} \quad T \leq T_2, \quad 1 \leq j \leq 4.
\end{equation}

As the similar discussion, there exists a $T_0 \leq T_2$ such that
\begin{equation}\label{k3}
  u_3 \geq 0, \quad v_3 \geq 0, \quad K_{j, 3} \leq 2m \quad \text{for} \quad T \leq T_0, \quad  1 \leq j \leq 4.
\end{equation}
Note $T_0 \geq C/m$ with some $C > 0$. The proof is essentially the same as that for $\ell \geq 4$ in below. So, the detail is omitted in here.

Let $\ell \geq 4$. We now assume that
\begin{equation}\label{kl}
  u_\ell \geq 0, \quad v_\ell \geq 0, \quad K_{j, \ell} \leq 2m \quad \text{for} \quad T \leq T_0, \quad  1 \leq j \leq 4
\end{equation}
hold. We will compute estimates for $u_{\ell + 1}$ and $v_{\ell + 1}$. By assumption, 
\[
  \| \eta_\ell \|_\infty \leq \frac{2h}{q} m =: a \quad  \text{and} \quad 0 \leq \zeta_\ell \leq \frac{2m (1+2m)}{\varepsilon} + 2hm := b
\]
hold for $t \in [0, T_0]$. Hence, by Lemma~\ref{lem-a} and \ref{lem-n}, one can see that
\begin{align}\label{u_l+1}
  \| u_{\ell+1} (t) \|_\infty & \leq \| U_\ell (t, 0) u_0 \|_\infty + \int_0^t \| U_\ell (t, s) \zeta_\ell (s) \|_\infty ds \\
  & \leq \frac{4}{3} m + \int_0^t \frac{4}{3} b ds \leq 2 m \notag
\end{align}
for $t \in [0, T_0]$. Note that we took $T_0 \leq m/2b$ in here. Since $u_\ell$ is a classical  solution to (\ref{u_l}), we can apply the maximum principle to obtain $u_{\ell + 1} (x, t) \geq 0$ for $x \in {\mathbb R}^n$ and $t \in [0, T_0]$. For using the expression
\[
  u_{\ell + 1} (t) = e^{t \Delta} u_0 + \int_0^t e^{(t-s) \Delta} \left[ - \eta_\ell (s) u_{\ell + 1}(s) + \zeta_\ell (s) \right] ds,
\]
we take $\nabla$ and $\| \cdot \|_\infty$ into above to obtain that
\[
  t^{1/2} \| \nabla u_{\ell + 1} (t) \|_\infty \leq \| u_0 \|_\infty + t^{1/2} \!\! \int_0^t (t-s)^{-1/2} [ a \| u_{\ell + 1} (s) \|_\infty + b] ds \leq 2m
\]
for $t \in [0, T_0]$ by (\ref{u_l+1}). Besides, by Lemma~\ref{lem-v},
\[
  0 \leq v_{\ell + 1} (x, t) \leq \| v_0 \|_\infty + t \sup_{ 0 \leq \tau \leq t} \| v_\ell (\tau) \|_\infty \leq 2m
\]
holds for $x \in {\mathbb R}^n$ and $t \in [0, T_0]$. By the heat semigroup, we obtain
\[
  t^{1/2} \| \nabla v_2 (t) \| \leq \| v_0 \| + t^{1/2} \int_0^t \| \nabla e^{(t-s) \Delta} \left[ - v_2 (s) + u_1 (s) \right] \|_\infty ds \leq 2m
\]
for $t \in [0, T_0]$. Therefore, 
\[
  u_{\ell+1} \geq 0, \quad v_{\ell+1} \geq 0, \quad K_{j, \ell+1} \leq 2m \quad \text{for} \quad T \leq T_0, \quad 1 \leq j \leq 4.
\]
Thus, $(\ref{kl})$ holds true for all $\ell \in {\mathbb N}$.

One may see that $u_\ell$ and $v_\ell$ are continuous in $t \in [0, T_0]$ for $\ell \in {\mathbb N}$. It is also easy to see that $\big\{ u_\ell, v_\ell, t^{1/2} \nabla u_\ell, t^{1/2} \nabla v_\ell \big\}_{\ell = 1}^\infty$ are Cauchy sequences in $C([0, T_0]; BUC)$. We denote $(u, v, \hat u, \hat v)$ by the limit functions of $(u_\ell, v_\ell, t^{1/2} \nabla u_\ell, t^{1/2} \nabla v_\ell)$ as $\ell \to \infty$. The coincidences $\hat u = t^{1/2} \nabla u$ and $\hat v = t^{1/2} \nabla v$ hold, obviously. The uniqueness follows from the Gronwall inequality, directly. Furthermore, by construction, $0 \leq u (x, t) \leq 2m$ and $0 \leq v (x, t) \leq 2m$ for $x \in {\mathbb R}^n$ and $t \in [0, T_0]$, as well as $(u, v)$ is a pair of the time-local unique classical solutions to (BZ). This completes the proof of Proposition~\ref{prop}.
\end{proof}

\begin{remark}{\rm
(i)~For $k \in {\mathbb N}$, it is possible to construct $u(t), v(t) \in C^k ({\mathbb R}^n)$ for $t \in (0, T_k]$, if $T_k$ is chosen small enough. Nevertheless, the solution is unique as long as it exists, one can extend the existence time of the solution up to $T_0$ having bounds for $k$-th derivatives. We hence confirm that $u(t) \in C^k ({\mathbb R}^n)$ for all $k \in {\mathbb N}$ and $t \in (0, T_0]$, which means that $u(t), v(t) \in C^\infty ({\mathbb R}^n)$ in $t \in (0, T_0]$, as well as $u, v \in C^\infty ({\mathbb R}^n \times (0, T_0])$.

\noindent (ii)~This iteration procedure also works for proving $u \geq q$ and $v \geq q$, provided if $u_0 \geq q$ and $v_0 \geq q$. Since $u_\ell \geq q$ and $v_\ell \geq q$ hold for $\ell \in {\mathbb N}$ by Lemma 1 and 4 with $c = q$, as the same way as above, we ensure that the limits also satisfy $q \leq u(x, t) \leq 2m$ and $q \leq v(x, t) \leq 2m$ for $x \in {\mathbb R}^n$ and $t \in [0, T_0]$.
}\end{remark}

%
%

\section{Invariant Region}
In this section, invariant regions are discussed. We first show that the solutions obtained by Proposition~\ref{prop} can be extended time-globally.

\begin{proposition}\label{wp}
Let $n \in {\mathbb N}$, $\varepsilon$, $h$, $d > 0$ and $q \in (0, 1)$. If $u_0$, $v_0 \in BUC ({\mathbb R}^n)$ with $0 \leq u_0(x) \leq m$ and $0 \leq v_0(x) \leq m$ for $x \in {\mathbb R}^n$ with some $m \geq 1$, then there exists a time-global unique classical solutions $(u, v)$ to ${\mathrm{(BZ)}}$ in $C ([0, \infty); BUC ({\mathbb R}^n))$ with $0 \leq u (x, t) \leq m$ and $0 \leq v (x, t) \leq m$ for $x \in {\mathbb R}^n$ and $t > 0$.
\end{proposition}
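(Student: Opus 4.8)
The plan is to feed the time-local theory of Proposition~\ref{prop} into a continuation argument, using as fuel an a priori bound that traps the solution inside the square $[0,m]^2$. First I would apply Proposition~\ref{prop} to obtain, on an interval $[0,T_0]$ with $T_0 \geq C/m$, a time-local unique classical solution $(u,v)$ satisfying $0 \leq u, v \leq 2m$. Positivity is already part of that statement, so everything reduces to sharpening the upper bounds from $2m$ down to $m$ and then iterating in time; the hypothesis $m \geq 1$ is exactly what makes the value $m$ admissible as a barrier.

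The heart of the matter is to show that $0 \leq u \leq m$ and $0 \leq v \leq m$ persist for as long as the solution exists. The bound $u \leq m$ comes first and uses only $v \geq 0$. I would compare $u$ with the spatially constant supersolution $\bar u \equiv m$: because $m \geq 1$ forces $\tfrac{1}{\varepsilon} m(1-m) \leq 0$, while $v \geq 0$ together with $m > q$ forces $-\,h v \tfrac{m-q}{m+q} \leq 0$, the reaction $f(m,v) := \tfrac{1}{\varepsilon} m(1-m) - h v \tfrac{m-q}{m+q}$ is nonpositive for every admissible $v$, so $\bar u \equiv m$ is indeed a supersolution of the first equation of (BZ). Writing $w := u - m$ and invoking the mean value theorem in the $u$-variable turns this into $\partial_t w - \Delta w - c(x,t)\, w \leq 0$ with $c := \partial_u f$ bounded on the range $0 \leq u, v \leq 2m$ (here $u + q \geq q > 0$ keeps $f$ smooth); since $w|_{t=0} = u_0 - m \leq 0$, the maximum principle yields $w \leq 0$, that is $u \leq m$. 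This is precisely the logistic-type comparison foreshadowed in Section~2. With $\| u(s) \|_\infty \leq m$ in hand, the bound on $v$ is then immediate from \eqref{int-L}: the exponential decay $\| e^{\tau L} \|_{L^\infty \to L^\infty} \leq e^{-\tau}$ gives $\| v(t) \|_\infty \leq e^{-t} m + m \int_0^t e^{-(t-s)}\, ds = m$, with no maximum principle needed.

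The step I expect to be the main obstacle is exactly the comparison giving $u \leq m$, for the same reason encountered in Lemma~\ref{lem-n}: on the unbounded domain ${\mathbb R}^n$ the supremum of $u$ need not be attained, so the elementary ``first touching time'' version of the maximum principle must be upgraded by Oleinik's technique (as in \cite{IKO02, GG99}) to exclude the maximum escaping to spatial infinity. The borderline case $m = 1$, where the logistic term vanishes at $u = m$, requires no separate treatment, since the comparison principle only uses the sign condition $f(m,v) \leq 0$, which continues to hold.

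Granting the uniform a priori bound $0 \leq u, v \leq m$, the passage to a global solution is routine continuation. Let $T^\ast$ denote the maximal existence time and suppose $T^\ast < \infty$. Choosing $t_1 \in (T^\ast - C/m, \, T^\ast)$, the bound guarantees $u(t_1), v(t_1) \in BUC$ with $0 \leq u(t_1), v(t_1) \leq m$, so Proposition~\ref{prop} restarts the solution on $[t_1, t_1 + C/m]$; because the guaranteed existence length $C/m$ depends only on the bound $m$, which never grows, this interval overshoots $T^\ast$ and contradicts maximality. Hence $T^\ast = \infty$, the trapping bounds hold for all $t > 0$, and uniqueness together with classical regularity are inherited from Proposition~\ref{prop} on each finite subinterval. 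This establishes Proposition~\ref{wp}.
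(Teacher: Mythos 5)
Your proposal is correct and follows essentially the same route as the paper: the a priori trapping of $(u,v)$ in $[0,m]^2$ rests on the sign of the reaction term at $u=m$ for $m\geq 1$ (handled by the maximum principle with Oleinik's technique, exactly as in the paper's appeal to the argument of Lemma~\ref{lem-n}), followed by continuation using the existence time $T_0 \geq C/m$ from Proposition~\ref{prop}. The only cosmetic differences are that you linearize via the mean value theorem and bound $v$ through the Duhamel formula with the $e^{-t}$ decay of $e^{tL}$, whereas the paper uses a first-touching-point argument for $v$ as well; both are valid.
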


\begin{proof}
By Proposition~\ref{prop}, we have already obtained a pair of time-local unique classical solutions $(u(x, t), v(x, t)) \in [0, 2m]^2$ for $x \in {\mathbb R}^n$ and $t \in [0, T_0]$. In what follows, we will derive the a priori estimates $u \leq m$ and $v \leq m$ for $t \in [0, T_0]$. It is enough to consider the local behavior of solutions. Using the same argument in the proof of Lemma~4, there does not exist $(\tilde x, \tilde t)$ such that $u(\tilde x, \tilde t) > m$ by $m \geq 1$. So, we have $u \leq m$. Furthermore, since $u \leq m$ and $v_0 \leq m$, one can also see that $v > m$ never happened. So, $v \leq m$.

Gathering the time-local solvability, uniqueness and upper bounds, we can extend the solution up to $t \in [0, 2 T_0]$. Repeating this argument infinitely many times, we obtain a time-global unique classical solutions $(u, v) \in [0, m]^2$.
\end{proof}

Note that Theorem~\ref{th}-(i) immediately follows from Proposition~\ref{wp}. And also, this implies that $[0, m]^2$ is an invariant region for $m \geq 1$. We are now position to show that $S := (q, \bar u)^2$ is an invariant region.

\begin{proof}[Proof of Theorem~$\ref{th}$-${\mathrm{(ii)}}$]
Let $(u_0, v_0) \in S$. By Proposition~\ref{prop}, Remark~2-(ii) and Proposition~\ref{wp}, we have obtained a time-global unique smooth solutions having the lower and upper bounds $(u, v) \in [q, 1]^2$. So, it is required to show that $u$ and $v$ never touched to $\bar u \in (q, 1)$. We assume that there exists $(\bar x, \bar t)$ such that $u (\bar x, \bar t) = \bar u$. Without loss of generality, we take $\bar t \in (0, \infty)$ is the first time, and $\bar x \in {\mathbb R}^n$. Since $u, v \geq q$ and $\bar u$ is a positive root of $g(u) = 0$, at $(\bar x, \bar t)$ we see that $\partial_t u >0$, $\Delta u \leq 0$ and $\displaystyle \frac{1}{\varepsilon}u(1-u)-hv\frac{u-q}{u+q} \leq 0$. This contradicts to that $u$ is a solution. One can avoid the case $u(x, t) \to \bar u$ at $|x| \to \infty$ by Oleinik's technique.

Similarly, if there exists $(\bar x, \bar t)$ such that $v(\bar x, \bar t) = \bar u$, then at $(\bar x, \bar t)$ we see that $\partial_t v > 0$, $d \Delta v \leq 0$ and $- v + u \leq - v + \bar u = 0$. This contradicts. It is also easy to see that $u$ and $v$ never touched to $q$, as the same arguments above. Therefore, $(u, v) \in S$.
\end{proof}

Finally, we will give the remaining parts of the proof of Theorem~\ref{th}.

\begin{proof}[Proof of Theorem~$\ref{th}$-${\mathrm{(iii)}}$]
We now put $m := \max \{ \| u_0 \|_\infty, \| v_0 \|_\infty \} > 1$ and $c_\ast \in (0, q)$, without loss of generality. Applying Lemma~\ref{lem-v}, Lemma~\ref{lem-n} with $c = c_\ast$ and Proposition~\ref{wp}, it is easy to see that $c_\ast \leq u(x, t) \leq m$ and $c_\ast \leq v(x, t) \leq m$ for $x \in {\mathbb R}^n$ and $t > t_\ast$. Let $\rho := \rho(t)$ be the solution to the following ordinary differential equation of logistic type:
\[
  \rho' = \frac{1}{\varepsilon} \rho (1 - \rho) \quad {\text{for}} \quad t > t_\ast, \quad \rho(t_\ast) = c_\ast.
\]
Note that $0 < c_\ast < q < 1$, and then $\rho$ is monotone increasing. So, there exists a $T_{\sharp1} > t_\ast$ such that $\rho (T_{\sharp1}) = q$. By the argument of the maximum principle, $u(x, t) \geq \rho(t)$ for $x \in {\mathbb R}^n$ and $t \in [t_\ast, T_{\sharp1}]$, that is to say, $\rho$ is a subsolution of $u$ up to $T_{\sharp1}$.

We secondly consider that $\sigma := \sigma (t)$ is the solution to
\[
  \sigma' = G_m (\sigma) := \frac{1}{\varepsilon} \sigma (1 - \sigma) - h m \frac{\sigma - q}{\sigma + q} \quad {\text{for}} \quad t > T_{\sharp1}, \quad \sigma (T_{\sharp1}) = q.
\]
Note that there exists $q_1 \in (q, 1)$ such that $G_m (q_1) = 0$, and $\sigma (t) \to q_1$ as $t \to \infty$. Since $\sigma$ is monotone increasing, for $q_2 \in (q, q_1)$, there exists a $T_{\sharp2} \geq T_{\sharp1}$ such that $\sigma (T_{\sharp2}) = q_2$. Again, $\sigma$ is a subsolution of $u$, we thus see that $u (x, t) \geq q_2 > q$ for $x \in {\mathbb R}^n$ and $t \geq T_{\sharp2}$.

Thirdly, we derive a lower bound of $v$. Let $\nu := \nu (t)$ be a solution to
\[
  \nu' = - \nu + q_2 \quad {\text{for}} \quad t > T_{\sharp2}, \quad \nu (T_{\sharp2}) = c_\ast.
\]
Obviously, $\nu$ is monotone increasing, and $\nu (t) \to q_2$ as $t \to \infty$. Hence, for $q_3 \in (q, q_2)$, there exists a $T_{\sharp3} \geq T_{\sharp2}$ such that $\nu (T_{\sharp3}) = q_3$. Since $\nu$ is a subsolution of $v$, we have $v (x, t) \geq q_3 > q$ for $x \in {\mathbb R}^n$ and $t \geq T_{\sharp3}$.

In what follows, we shall derive upper bounds of $u$ and $v$. Let us define $\kappa := \kappa (t)$ as the solution to
\[
  \kappa' = G_\ast (\kappa) := \frac{1}{\varepsilon} \kappa (1 - \kappa) - h q_3 \frac{\kappa - q}{\kappa + q} \quad {\text{for}} \quad t > T_{\sharp3}, \quad \kappa (T_{\sharp3}) = m.
\]
Note that $\kappa$ is monotone decreasing, and $\kappa (t) \to \kappa_\ast$ as $t \to \infty$, where $\kappa_\ast \in (q, \bar u)$ satisfies $G_\ast (\kappa) = 0$. For $u_\ast \in (\kappa_\ast, \bar u)$, there exists a $T_{\sharp4} \geq T_{\sharp3}$ such that $\kappa (T_{\sharp4}) = u_\ast$. Since $v \geq q_3$ for $t \geq T_{\sharp3}$, it holds that $u (x, t) \leq \kappa (t)$ for $x \in {\mathbb R}^n$ and $t \geq T_{\sharp3}$. That is to say, $\kappa$ is a supersolution of $u$. Moreover, $u (x, T_{\sharp4}) \leq u_\ast$ for $x \in {\mathbb R}^n$. We thus see that $u (x, t) \leq u_\ast < \bar u$ for $x \in {\mathbb R}^n$ and $t \geq T_{\sharp4}$.

Since $u \leq u_\ast < \bar u$ for $t \geq T_{\sharp4}$, there exists a $T_\sharp \geq T_{\sharp4}$ such that $v (x, T_\sharp) \leq \bar u$ for $x \in {\mathbb R}^n$, by observing a supersolution of $v$:
\[
  \mu' = - \mu + u_\ast \quad {\text{for}} \quad t > T_{\sharp4}, \quad \mu (T_{\sharp4}) = m.
\]
Hence, $v (x, t) < \bar u$ for $x \in {\mathbb R}^n$ and $t > T_\sharp$.

Note that $S$ is an invariant region by Theorem~\ref{th}-(ii). Therefore, summing up the arguments above, $(u(x, t), v(x, t)) \in S$ for $x \in {\mathbb R}^n$ and $t > T_\sharp$. This completes the proof of Theorem~\ref{th}-(iii).
\end{proof}

\begin{remark}{\rm
(i)~Looking at the proof above, we find the following fact. Let $\bar \kappa \in (q, \bar u)$ be a root of $\kappa (1 - \kappa) (\kappa + q) - \varepsilon h q_1 (\kappa - q) = 0$. For $q_\natural \in [q, q_1)$ and $u_\natural \in (\bar \kappa, \bar u]$, then there exists a $T_\natural \geq t_\ast$ such that $(u(x, t), v(x, t)) \in S_\natural := (q_\natural, u_\natural)^2 \subset S$ for $x \in {\mathbb R}^n$ and $t \geq T_\natural$. Note that $S_\natural$ is an invariant region depending on $m$.

\noindent (ii)~It is still open whether $(u, v) \in S$ for large $t$, when $u_0 \geq 0$, $v_0 \geq 0$ and either $u_0 \not\equiv 0$ or  $v_0 \not\equiv 0$.
}\end{remark}

%
%


\begin{thebibliography}{aa}
\bibitem{Chen00} G.~Chen, {\em A mathematical model for bifurcations in a Belousov-Zhabotinsky reaction}. Phys. D {\bf 145} (2000), no. 3-4, 309-29.

\bibitem{GG99} M.-H.~Giga and Y.~Giga, {\em Nonlinear Partial Differential Equationsm}. (in Japanese) Ky{\=o}ritsu Shuppan, 1999. Expanded version in English, M.-H.~Giga, Y.~Giga and J.~Saal, {\em Nonlinear Partial Differential Equations; Asymptotic Behavior of Solutions and Self-Similar Solutions}. Progress in Nonlinear Differential Equations and their Applications, \textbf{79}, Birkh{\"a}user, Boston, 2010.

\bibitem{GIM99} Y.~Giga, K.~Inui and S.~Matsui, {\em On the Cauchy problem for the Navier-Stokes equations with nondecaying initial data}. Advances in fluid dynamics, 27-68, Quad. Mat. {\bf 4}, Dept. Math., Seconda Univ. Napoli, Caserta, (1999).

\bibitem{IKO02} A.~M.~Il'in, A.~S.~Kalashnikov and O.~A.~Oleinik, {\em Second-order linear equations of parabolic type}, J. Math. Sci. \textbf{108} (2002), 435-542.

\bibitem{KT86} J.~P.~Keener and J.~J.~Tyson, {\em Spiral waves in the Belousov-Zhabotinskii reaction}. Phys. D {\bf 21} (1986), no. 2-3, 307-324.

\bibitem{KT88} J.~P.~Keener and J.~J.~Tyson, {\em The motion of untwisted untorted scroll waves in Belousov-Zhabotinsky reagent}. Science {\bf 239} (1988), no. 4845, 1284-1286.

\bibitem{TY06} Y.~Takei, A.~Yagi, {\em Fixed point of contraction and exponential attractors}. Sci. Math. Jpn. {\bf 64} (2006), no. 1, 37-44.

\bibitem{Tanabe79} H.~Tanabe, {\em Equations of evolution}. Translated from the Japanese by N.~Mugibayashi and H.~Haneda. Monographs and Studies in Mathematics, {\bf 6}. Pitman (Advanced Publishing Program), Boston, Mass.-London, 1979.

\bibitem{YOS09} A.~Yagi, K.~Osaki and T.~Sakurai, {\em Exponential attractors for Belousov-Zhabotinskii reaction model}. Discrete Contin. Dyn. Syst. 2009, Dynamical systems, differential equations and applications. 7th AIMS Conference, suppl., 846-856.
\end{thebibliography}
\end{document}